 \newtheorem{thm}{Theorem}[section]
 \newtheorem{cor}[thm]{Corollary}
 \newtheorem{lem}[thm]{Lemma}
 \theoremstyle{definition}
 \newtheorem{defn}[thm]{Definition}
 \theoremstyle{remark}
 \newtheorem{rem}[thm]{Remark}
 \newtheorem{ex}{Example}
 \numberwithin{equation}{section}
\begin{document}

%
%
%
%
%
%
%
%
%

\title[On the Exponent of the Schur multiplier of a Pair ... ]
 {On the Exponent of the Schur multiplier of a Pair of Finite $p$-Groups}

\author[F. Mohammadzadeh]{Fahimeh Mohammadzadeh}
\address{Department of Mathematic\\
Faculty of Sciences\\
Payame Noor University\\
Iran}
\email{F.mohamadzade@gmail.com}

\author[A. Hokmabadi]{Azam Hokmabadi}
\address{Department of Mathematic\\
Faculty of Sciences\\
Payame Noor University\\
Iran}
\email{ahokmabadi@pnu.ac.ir}

\author[B. Mashayekhy]{Behrooz Mashayekhy}
\address{Department of Pure Mathematics\\
Center of Excellence in Analysis on Algebraic Structures\\
Ferdowsi University of Mashhad\\
P. O. Box 1159-91775\\
 Mashhad\\
 Iran}
\email{bmashf@um.ac.ir}


\subjclass{ 20C25; 20D15}
\keywords{Pair of groups, Schur multiplier of a pair, Finite $p$-groups}

\date{}

\begin{abstract}
In this paper, we find an upper bound for the exponent of the Schur multiplier of a pair $(G,N)$ of finite $p$-groups, when $N$ admits a complement in $G$. As a consequence, we show that the exponent of the Schur multiplier of a pair $(G,N)$ divides $\exp(N)$ if $(G,N)$ is a pair of finite $p$-groups of class at most $p-1$. We also prove that if $N$ is powerfully embedded in $G$, then the exponent of the Schur multiplier of a pair $(G,N)$ divides $\exp(N)$.
\end{abstract}

\maketitle
\section{Introduction and Motivation}

In 1998,  Ellis \cite{el98} extended the theory of the Schur multiplier for a pair of groups. By a pair of groups $(G,N)$ we mean a group $G$ with a normal subgroup $N$ of $G$. The Schur multiplier of a pair $(G,N)$ of groups is a
functorial abelian group $M(G,N)$ whose principal feature is a
natural exact sequence
\begin{eqnarray*}
 H_3(G) &\rightarrow& H_3(G/N) \rightarrow M(G,N) \rightarrow M(G)
\rightarrow M(G/N)\\
&\rightarrow& N/[N,G]\rightarrow (G)^{ab} \rightarrow (G/N)^{ab} \rightarrow 0
\end{eqnarray*}
in which  $H_3(G)$ is the third homology group of $G$ with integer coefficients.
In particular, if $N=G$, then $M(G,G)$ is the usual Schur multiplier $M(G)$.

It has been a considerable question that when $\exp(M(G))$ divides $\exp(G)$, in which $\exp(G)$ denotes the exponent of $G$.
Macdonald and Wamsely (see \cite{bay})
constructed an example of a group of exponent 4, whereas its Schur
multiplier has exponent 8, hence the conjecture is not true in
general. In 1973, Jones \cite{jons} proved
that the exponent of the Schur multiplier of a finite $p$-group of
class $c \geq 2$ and exponent $p^{e}$ is at most $p^{e(c-1)}$ and hence $\exp(M(G))$ divides $\exp(G)$ when $G$ is a $p$-group of class 2. In 1987, Lubotzky and Mann \cite{lub} proved that $\exp(M(G))$ divides $\exp(G)$ when $G$ is a powerful $p$-group. A
result of Ellis \cite{el2001} shows that if $G$ is a $p$-group of class $k
\geq 2$ and exponent $p^{e}$, then $\exp(M(G))\leq p^{e\lceil k/2\rceil}$, where $\lceil k/2\rceil$ denotes the smallest integer
$n$ such that $n \geq k/2$. Moravec \cite{mor} showed that
$\lceil k/2\rceil$ can be replaced by $2\lfloor \log_2{k}\rfloor$
which is an improvement if $k\geq 11$. He \cite{mor} also proved that if $G$ is
a metabelian group of exponent $p$, then $\exp(M(G))$ divides $p$.
Kayvanfar and  Sanati \cite{kay} proved that if $G$ is a $p$-group, then $\exp(M(G))$ divides
$\exp(G)$ when $G$ is a finite $p$-group of class 3, 4 or 5 with some conditions.  The authors \cite{mhm}  extended the result and proved that $\exp(M(G))$ divides $\exp(G)$ when $G$ is a finite $p$-group of class at most $p-1$.\

On the other hand, Ellis \cite{el98} proved  that $\exp(M(G,N))$ divides $|N|$ for any pair $(G,N)$ of finite groups, in which $|N|$ denotes  the order of $N$. Now a question that can naturally arise, is whether $\exp(M(G,N))$ divides $\exp(N)$ when $N$ is a proper normal subgroup of $G$. In this paper, first we present an example to give
 a negative answer to the question. Second, we give some conditions
 under which the exponent of $M(G,N)$ divides the exponent of $N$.

 In Section 2, we give an upper bound for $\exp(M(G,N))$  in terms of $\exp(N)$, when $(G,N)$ is a pair of finite $p$-groups such that $N$ admits a complement in $G$, and apply it to  prove that if $(G,N)$ is a pair of finite $p$-groups of class at most $p-1$ (i.e $[N, \ _{p-1}G]=1$), then $\exp(M(G,N))$ divides $\exp(N)$.
Finally in Section 3, we show that if $(G,N)$ is a pair of finite $p$-groups and $N$ is powerfully embedded in $G$, then $\exp(M(G,N))$ divides $\exp(N)$.

\section{ Nilpotent pairs of $p$-groups}

I. D. Macdonald and J.W. Wamsley \cite{bay} gave an example which shows that $\exp(M(G,G))$ dose not divide $\exp(G)$, in general. The following example shows that $\exp(M(G,N))$ dose not divide $\exp(N)$ when $N$ is a proper normal subgroup of $G$. \\

\begin{ex}
Let $D=A >\!\!\!\! \lhd <x_1>$, where $A=<x_2>\times <x_3>\times<x_4>\times<x_5>\cong {\mathbf Z}_{{4}}\times{\mathbf Z}_{{4}}\times{\mathbf Z}_{{4}}\times{\mathbf Z}_{{2}}$ and $x_1$ is an automorphism of order 2 of $A$ acting in the following way:
$$[x_2,x_1]=x_2^2,\ \ [x_3,x_1]=x_3^2,\ \ [x_4,x_1]=x_4^2,\ \ [x_5,x_1]=1.$$
There exists an automorphism $a$ of $D$ of order 4 acting on $D$ as follows:
$$[x_1,a]=x_3,\ \ [x_2,a]=x_2^2 x_3^2 x_4^3\ \ [x_3,a]=x_5,\ \ [x_4,a]=x_2^2,\ \ [x_5,a]=x_3^2.$$
Form $N=D >\!\!\! \lhd <a>$ and put $G=N >\!\!\! \lhd <b>$, where $b^2=1$ and
$[x_1,b]=x_2,\  [x_2,b]=x_2^2x_4^3x_5,\  [x_3,b]=x_4,\  [x_4,b]=x_3^2x_4^2,\ [x_5,b]=x_2^2x_3^2x_4^2,\  [a,b]=x_1.$
 Moravec \cite{mor} showed that the group $G$ is a nilpotent group of class 6 and exponent 4 and $M(G)\cong{\mathbf Z}_{{2}}\times{\mathbf Z}_{{4}}\times{\mathbf Z}_{{8}}$.
 Ellis \cite{el98} proved that if $G=K >\!\!\!\! \lhd Q$, then $M(G)\cong M(G,K)\oplus M(Q)$. This implies that $M(G,N)\cong M(G)$. Therefore $\exp(M(G,N))=8$ dose not divide $\exp(N)=4$.
 \end{ex}

Here we first give an upper bound for the exponent of $M(G,N)$ in terms of the exponent of $N$, when $(G,N)$ is a pair of finite $p$-groups such that $N$ admits a complement in $G$. Since our proof relies on commutator calculations, we need to state the following lemmas.

\begin{lem} (\cite{st}). Let $x_1, x_2,
\ldots , \ x_r$ be any elements of a group and $\alpha$ be a nonnegative integer. Then
$$(x_1 x_2 ... x_r)^{\alpha}=x_{i_1}^{\alpha}x_{i_2}^{\alpha} ...x_{i_r}^{\alpha} \upsilon_1^
{f_1(\alpha)}\upsilon_2^{f_2(\alpha)}\cdots \ ,$$
where $\{i_1, \ i_2, \ldots,\ i_r \} = \{1, 2, \ldots, r \}$ and $\upsilon_1,\upsilon_2, \cdots$  are commutators of weight at least two in the letters $x_i^,$ in ascending order and
\begin{equation}
f_i(\alpha)=a_1{\alpha \choose 1}+ a_2{\alpha \choose 2}+\cdots+a_{w_i}{\alpha \choose w_i},
\end{equation}
with $a_1, \ldots, a_{wi} \in \mathbf{Z} $ and $w_i$ is the
weight of $\upsilon_i$ in elements $x_1, \ldots, x_r$.
\end{lem}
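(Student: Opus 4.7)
The plan is to follow the collection process of P. Hall, proceeding by induction on $\alpha$ while repeatedly applying the commutator identity $yx = xy[y,x]$ to rearrange the factors of $(x_1 x_2 \cdots x_r)^{\alpha}$ into the prescribed canonical form. The cases $\alpha = 0, 1$ are immediate, so for the inductive step I would write
$$(x_1 x_2 \cdots x_r)^{\alpha+1} = (x_1 x_2 \cdots x_r)^{\alpha}\,(x_1 x_2 \cdots x_r),$$
expand the first factor via the inductive hypothesis, and then collect the newly appended letters $x_1, \ldots, x_r$ leftwards through the already-collected tail of commutator powers.

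Each time a letter $x_j$ is moved past a collected factor $\upsilon_k^{f_k(\alpha)}$, the resulting correction commutators $[\upsilon_k, x_j]$ (and the further ones produced by the identities $[y,xz]=[y,z][y,x][[y,x],z]$ and $[yz,x]=[y,x][[y,x],z][z,x]$) have strictly greater weight in the $x_i$ than $\upsilon_k$ itself. They can therefore be inserted further down the ascending list without disturbing factors of lower weight that are already collected. Bookkeeping of the number of times each basic commutator $\upsilon_i$ is produced yields a recurrence of the shape
$$f_i(\alpha+1) - f_i(\alpha) = P_i\bigl(f_{j_1}(\alpha), \ldots, f_{j_s}(\alpha)\bigr),$$
in which $P_i$ is a $\mathbf{Z}$-polynomial whose arguments are exponents attached to basic commutators of weight strictly less than $w_i$.

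The second half of the argument is a downward induction on weight. Assuming each $f_j$ with $w_j < w_i$ is already known to be a $\mathbf{Z}$-linear combination of $\binom{\alpha}{1}, \ldots, \binom{\alpha}{w_j}$, the right-hand side of the recurrence is a polynomial in $\alpha$ of degree at most $w_i - 1$. The Pascal identity $\binom{\alpha+1}{k}-\binom{\alpha}{k}=\binom{\alpha}{k-1}$ then implies that any $\mathbf{Z}$-valued function on the nonnegative integers whose first difference is an integer polynomial in $\alpha$ of degree at most $w_i - 1$ is itself a $\mathbf{Z}$-linear combination of $\binom{\alpha}{1}, \ldots, \binom{\alpha}{w_i}$, which is precisely the claimed form for $f_i(\alpha)$.

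The main obstacle is the combinatorial bookkeeping in the collection step: one has to verify carefully that every auxiliary commutator produced by passing a letter through the collected tail has weight strictly greater than that of the commutator being passed, so that the ascending-order convention is preserved and no cascade returns to a lower weight. This weight-monotonicity is what guarantees that the process terminates at any fixed weight bound and what ultimately pins down the degree of $f_i$ in $\alpha$ as at most $w_i$, matching the statement.
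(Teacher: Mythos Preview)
The paper does not give its own proof of this lemma: it is quoted verbatim from Struik \cite{st}, so there is nothing in the paper to compare your argument against. Your outline is essentially the classical Hall--Struik collection argument, and the overall architecture (induction on $\alpha$, collection via $yx = xy[y,x]$, then a finite-difference argument using Pascal's identity) is the right one.

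There is, however, one genuine gap. You assert that once each $f_j$ with $w_j < w_i$ is a $\mathbf{Z}$-combination of $\binom{\alpha}{1},\ldots,\binom{\alpha}{w_j}$, the difference $f_i(\alpha+1)-f_i(\alpha)=P_i(f_{j_1}(\alpha),\ldots,f_{j_s}(\alpha))$ automatically has degree at most $w_i-1$ in $\alpha$. This is the heart of the matter and is \emph{not} immediate from what you wrote: $P_i$ is in general a genuine polynomial (products and binomial expressions in the $f_j$'s arise when a newly created commutator is pushed past a block $\upsilon_k^{f_k(\alpha)}$), so a priori the degree could exceed $w_i-1$. The missing observation is a weight-accounting one: every commutator contributing to the increment $f_i(\alpha+1)-f_i(\alpha)$ must involve at least one letter from the freshly appended copy of $x_1\cdots x_r$, so the total weight contributed by the ``old'' factors $\upsilon_{k_1},\ldots,\upsilon_{k_t}$ satisfies $\sum w_{k_s} \le w_i - 1$. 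Since the degree in $\alpha$ of any monomial in the $f_{k_s}$'s (or binomial functions thereof) is bounded by $\sum w_{k_s}$, this gives exactly the bound $w_i-1$ you need. You should make this step explicit.

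Two minor points: your recurrence should also allow the bare exponent $\alpha$ (coming from the blocks $x_{i_k}^{\alpha}$) among its arguments, not only the $f_j$'s; and what you call a ``downward induction on weight'' is in fact an upward induction, since you assume the claim for all $w_j < w_i$ and deduce it for $w_i$.
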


\begin{lem} (\cite{st}). Let $\alpha$
be a fixed integer and $G$ be a nilpotent group of class at most
$k$. If $b_1, \ldots, b_r \in G$ and $r<k$, then
$$[b_1,\ldots,b_{i-1},b_i^{\alpha},b_{i+1},\ldots,b_r]=[b_1,...,b_r]^{\alpha}
\upsilon_1^{f_1(\alpha)}\upsilon_2^{f_2(\alpha)}\cdots,$$ where
$\upsilon_1, \upsilon_2, \ldots$ are commutators in $b_1, \ldots,b_r$ of weight strictly
greater than $r$, and every $b_j$, $1\leq j \leq r$, appears in
each commutator $\upsilon_i$. The $f_i(\alpha)$ are of the form (2.1), with $a_1, \ldots, a_{w_i} \in \mathbf{Z}$ and $w_i$ is the weight of $\upsilon_i$ (in $b_1, \ldots, b_r$)
minus $(r-1)$.
\end{lem}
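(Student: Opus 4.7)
The plan is to proceed by induction on $\alpha \ge 0$, reducing the case $\alpha < 0$ to the positive case by replacing $b_i$ with $b_i^{-1}$ and absorbing the resulting signs into the integer coefficients $a_1, \ldots, a_{w_i}$ of (2.1). A preliminary simplification sends $i$ to $r$: using the Hall--Witt identity together with the left-normed convention, one can reshuffle the commutator so that the exponentiated entry sits in the rightmost slot, at the cost of introducing additional higher-weight commutators that can be absorbed into the final list $\upsilon_1^{f_1(\alpha)}\upsilon_2^{f_2(\alpha)}\cdots$.

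For the inductive step from $\alpha$ to $\alpha+1$, write $u = [b_1, \ldots, b_{r-1}]$ and apply the basic identity $[u, xy] = [u, y]\,[u, x]^y$ with $x = b_r$ and $y = b_r^{\alpha}$, then unfold $[u, b_r]^{b_r^{\alpha}} = [u, b_r] \cdot \bigl[[u, b_r], b_r^{\alpha}\bigr]$:
$$[u, b_r^{\alpha+1}] = [u, b_r^{\alpha}] \cdot [u, b_r] \cdot \bigl[[u, b_r], b_r^{\alpha}\bigr].$$
The first factor is handled by the inductive hypothesis, the second is the ``main term'' $[b_1, \ldots, b_r]$, and the third is a commutator of strictly higher weight whose inner power $b_r^{\alpha}$ is expanded by a further application of Lemma 2.1. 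Collecting all factors into normal form and invoking Pascal's identity ${\alpha+1 \choose k} = {\alpha \choose k} + {\alpha \choose k-1}$ then produces $f_j(\alpha+1)$ in the required shape.

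The principal technical obstacle is the combinatorial bookkeeping: one must verify that each $f_j(\alpha)$ is a $\mathbf{Z}$-linear combination of ${\alpha \choose 1}, \ldots, {\alpha \choose w_j}$ with $w_j$ equal to the weight of $\upsilon_j$ in $b_1, \ldots, b_r$ minus $(r-1)$. The offset ``$-(r-1)$'' arises because the ``frame'' $b_1, \ldots, b_{r-1}$ contributes exactly one fixed occurrence each in every $\upsilon_j$, leaving the remaining $\mathrm{wt}(\upsilon_j) - (r-1)$ entries (occupied by copies of $b_r$ or by the nested pieces generated in expanding $b_r^{\alpha}$) as the ones that actually contribute binomial factors. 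Finally, the nilpotency assumption that $G$ has class at most $k$, combined with $r < k$, guarantees that only finitely many $\upsilon_j$ of weight at most $k$ contribute, so the product on the right-hand side terminates.
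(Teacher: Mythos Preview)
The paper does not prove this lemma; it is simply cited from Struik \cite{st}, so there is no in-paper argument to compare against. Your sketch follows what is essentially the standard (and Struik's) route: induct on $\alpha$, unfold via the basic commutator identities, and reassemble the exponents using Pascal's rule to keep each $f_j$ a $\mathbf{Z}$-combination of binomial coefficients of the right degree.

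Two points deserve tightening. First, the reduction to $i=r$ is more cleanly done without Hall--Witt: in the left-normed expression $[b_1,\ldots,b_i^{\alpha},\ldots,b_r]$ the power $b_i^{\alpha}$ already sits in the outermost slot of the inner subcommutator $[b_1,\ldots,b_i^{\alpha}]$, so you can expand that first (the case $i=r$ for a shorter word) and then bracket successively with $b_{i+1},\ldots,b_r$ using $[xy,z]=[x,z]^y[y,z]$ together with Lemma~2.1 to normalise the resulting products. Second, the third factor $\bigl[[u,b_r],b_r^{\alpha}\bigr]$ in your inductive step is not handled by Lemma~2.1 (which expands a power of a product, not a commutator with a power); what you need there is a recursive application of the present lemma at weight $r+1$. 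That in turn requires a downward induction on the weight (from $k$ down to $2$, the top case being trivial by nilpotency) nested inside the induction on $\alpha$, and you should make this double-induction structure explicit so that the bound $w_i = \mathrm{wt}(\upsilon_i)-(r-1)$ on the binomial degrees is visibly preserved at each stage.
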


It is noted by Struik \cite{st} that the above lemma can be proved similarly if $[b_1, \dots ,b_{i-1},b_i^{\alpha},b_{i+1}, \dots ,b_r]$ and $[b_1, \dots ,b_r]$ are replaced by arbitrary commutators (that is monimial commutators with parentheses arranged arbitrarily).

To prove the main results we require the following notions.
\begin{defn}
 A relative central
extension of a pair $(G,N)$ of groups consists of a group homomorphism
$\sigma: M \rightarrow G$ together with an action of $G$ on $M$
such that \\
i) $\sigma (M)=N$;\\
ii) $\sigma (m^g)=g^{-1} \sigma (m)g$, for all $m \in M, g \in G$;\\
iii) $m^{\sigma(m_1)}=m_1^{-1} m m_1$, for all $m \in M, g \in G$;\\
iv) $G$ acts trivially on $ker \sigma$.
\end{defn}

Let $(G,N)$ be a pair of groups and $\sigma: M \rightarrow G$ be a relative central
extension of $(G,N)$. The $G$-commutator subgroup of $M$ is defined
the subgroup $[M,G]$ generated by all the $G$-commutators $[m,g]=m^{-1}m^g ,$
where $m^{g}$ is the action of $g$ on $m$, for all $g \in G, m \in M$. Also for all positive integer $n$, we define
$$Z_n(M,G)= \{ m \in M | \ [m,g_1,g_2,\ldots,g_n]=1 , \ for \ all\ g_1, g_2, \ldots , g_n \in G \},$$ in which $ [m,g_1,g_2, \ldots,g_n]$ denotes $[\cdots[[m,g_1],g_2], \ldots,g_n]$. It is easy to see that $Z_n(M,G)\leq Z_n(M)$. The subgroup $Z_1(M,G)$ is well known as the $G$-center of $M$ which is denoted by $Z(M,G)$.

Let $(G,N)$ be a pair of groups and $k$ be a positive integer. We define
$\gamma_{k+1}(N,G)= [N, \ _kG]$ in which $[N, \ _kG]=[ \cdots [[N,\underbrace{G],G], \ldots ,G]}_{k-times}$.
A pair $(G,N)$ of groups is called nilpotent of class $k$ if $\gamma_{k+1}(N,G)=1$ and $\gamma_{k}(N,G)\neq 1$. It is clear that any pair of finite $p$-groups is nilpotent.

\begin{defn}
 A relative central
extension $\sigma: N^*\rightarrow G$ of a pair $(G,N)$ is
called a covering pair if there exists a subgroup $A$ of $N^*$
such that \\
i) $A \leq Z(N^*,G)\cap [N^*,G]$;\\
ii) $A\cong M(G,N)$;\\
iii) $N\cong N^*/A$.\

Ellis proved that any pair of finite groups has at least one covering pair [2, Theorem 5.4].

\end{defn}

Hereafter in this section, we suppose that $(G,N)$ is a pair of finite groups and $K$ is the complement of $N$ in $G$. Also, suppose that
$\sigma: N^*\rightarrow G$ is a covering pair of $(G,N)$ with a subgroup of $A$ of $N^*$ such that $A \leq Z(N^*,G)\cap [N^*,G]$, $A\cong M(G,N)$ and $N\cong N^*/A$. Then for all $k \in K$, the homomorphism $\psi_k:N^* \rightarrow N^*$ defined by $ n^* \mapsto {n^*}^k$ is an automorphism of  $N^*$ in which ${n^*}^k$ is induced by the
action of $G$ on $N^*$. Considering the homomorphism $\psi:K \rightarrow Aut(N^*)$ given by $\psi(k)=\psi_k$ for all $k \in K$, we form the semidirect product of $N^*$
by $K$ and denote it by $G^*=N^*K$. Then it is easy to check
that the subgroups $[N^*,G]$ and $Z(N^*,G)$ are contained in
$[N^*,G^*]$ and $Z(N^*, G^*)$, respectively. If $\delta : G^*
\rightarrow G$ is the map given by
$\delta(n^*k)=\sigma(n^*)k,$ for all $n^* \in N^*$ and $k\in K$,
then $\delta$ is an epimorphism with $\ker \delta=\ker \sigma$.

\begin{lem}
 By the above notation, let  $(G,N)$ be
a nilpotent pair of finite groups of class $k$ and $\exp(N)=p^e$.
 Then every commutator of weight $w$ ($w \geq 2$) in $[N^*,\ _{w-1}G^*]$ has an order dividing $p^{e+m(k+1-w)}$, where $m= \lfloor \log_p k \rfloor $.
 \end{lem}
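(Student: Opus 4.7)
The plan is to prove the bound by reverse induction on $w$, running from $w = k+1$ down to $w = 2$. Three preliminary observations are the backbone of the argument. First, since $N^*/A \cong N$ has exponent $p^e$, we have $(n^*)^{p^e} \in A$ for every $n^* \in N^*$. Second, $A$ is central in the whole group $G^*$: by hypothesis $A \leq Z(N^*, G) \leq Z(N^*)$, and the complement $K$ acts on $N^*$ via the $G$-action, which fixes $A$ pointwise. Third, $[N^*, {}_{k} G^*] \leq A$ (its image in $N = N^*/A$ is $[N, {}_{k} G] = 1$, because the pair has class $k$), and hence $[N^*, {}_{k+1} G^*] \leq [A, G^*] = 1$.

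For the base case $w = k+1$, take a commutator $c = [n^*, g_1, \ldots, g_k]$ with $n^* \in N^*$ and $g_i \in G^*$. Since $(n^*)^{p^e} \in A \leq Z(G^*)$, we have $[(n^*)^{p^e}, g_1, \ldots, g_k] = 1$. Expanding by Lemma 2.2 with $\alpha = p^e$ will give
\[
1 = c^{p^e} \prod_j v_j^{f_j(p^e)},
\]
in which each $v_j$ is a commutator in the letters $n^*, g_1, \ldots, g_k$ of weight greater than $k+1$ in which $n^*$ appears at least once. Every such $v_j$ lies in $[N^*, {}_{k+1} G^*] = 1$, so the product vanishes and $c^{p^e} = 1$, matching the desired bound $p^{e + m\cdot 0}$.

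For the inductive step, fix $2 \leq w \leq k$ and assume the bound for all weights strictly greater than $w$. Given $c = [n^*, g_1, \ldots, g_{w-1}]$, set $q = e + m(k+1-w)$. Since $q \geq e$, the element $(n^*)^{p^q}$ lies in $A \leq Z(G^*)$, so $[(n^*)^{p^q}, g_1, \ldots, g_{w-1}] = 1$. Applying Lemma 2.2 with $\alpha = p^q$ and $r = w$ then yields
\[
1 = c^{p^q} \prod_j v_j^{f_j(p^q)},
\]
where each $v_j$ is a commutator of weight $w + s_j$ ($s_j \geq 1$) in the letters $n^*, g_1, \ldots, g_{w-1}$. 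Terms with $w + s_j > k+1$ vanish by the third observation above, so only $1 \leq s_j \leq k+1-w$ contribute; by the inductive hypothesis (and the remark after Lemma 2.2 covering arbitrary commutator arrangements), each such $v_j$ has order dividing $p^{e + m(k - w - s_j + 1)}$.

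The crux is the arithmetic estimate on $f_j(p^q)$. By (2.1), $f_j(p^q)$ is a $\mathbf{Z}$-linear combination of the binomial coefficients $\binom{p^q}{l}$ with $1 \leq l \leq s_j + 1$. The standard divisibility $p^{\,q - \lfloor \log_p l \rfloor} \mid \binom{p^q}{l}$, combined with $l \leq s_j + 1 \leq k + 2 - w \leq k$ (so $\lfloor \log_p l \rfloor \leq m$), will give $p^{q - m} \mid f_j(p^q)$. Since $q - m = e + m(k - w) \geq e + m(k - w - s_j + 1)$ for every $s_j \geq 1$, we then conclude $v_j^{f_j(p^q)} = 1$, and hence $c^{p^q} = 1$, closing the induction. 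The chief obstacle is exactly this binomial-coefficient step: the choice $m = \lfloor \log_p k \rfloor$ is what is needed to absorb the loss $\lfloor \log_p l \rfloor$ uniformly over $l \leq k$, and without it the downward recursion would fail to close.
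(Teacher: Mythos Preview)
Your proof is correct and follows essentially the same route as the paper: reverse induction on the weight $w$, the base case handled via $[N^*,\ _{k+1}G^*]=1$ together with $(N^*)^{p^e}\subseteq A$, and the inductive step via Lemma~2.2 applied to $[(n^*)^{\alpha},g_1,\ldots,g_{w-1}]$ with $\alpha=p^{e+m(k+1-w)}$, followed by the binomial divisibility estimate to kill the correction terms. Your explicit justification that $A\leq Z(G^*)$ and your formulation of the divisibility as $p^{\,q-\lfloor\log_p l\rfloor}\mid\binom{p^q}{l}$ are slightly more detailed than the paper's treatment, but the argument is the same.
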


\begin{proof}
We use reverse induction on $w$ to prove the lemma.
Since $(G,N)$ is nilpotent of class $k$ and $N\cong N^*/A$, $G\cong G^*/A$ and $A\leq Z(N^*,G^*)$, we have $[N^*,\ _{k+1}G^*]=1$. On the other hand, $\exp(N)=p^e$ implies that $[{N^*}^{p^e},G^*]=1$. Hence the result follows for $w \geq k+1$ by Lemma 2.2.
 Now assume that $l<k+1$ and the result is true for all $w>l$. We will prove the result for $l$. Put
$\alpha=p^{e+m(k+1-l)}$ with $m= \lfloor \log_p k \rfloor $ and let $u=[n, x_2, \ldots , x_l]$ be a commutator of weight $l$, where $n\in N^*$ and $x_2, \ldots , x_l \in G^*$. Then by Lemma 2.2, we have
$$ [n^{\alpha}, x_2, \ldots , x_l]=[n, x_2, \ldots , x_l]^{\alpha}
\upsilon_1^{f_1(\alpha)} \upsilon_2^{f_2(\alpha)} \cdots,$$
where $\upsilon_i$ is a commutator on $n, x_2, \ldots, x_l$ of weight $w_i$ such that $l < w_i \leq k+1 $, and  $f_i(\alpha)=a_1{\alpha \choose 1}+
a_2{\alpha \choose 2}+ \dots +a_{k_i}{\alpha \choose k_i}$, where
$k_i=w_i - l + 1 \leq k$, for all $ i \geq 1$.
One can easily check that $p^t$ divides $p^{t+m} \choose s$ with $m=\lfloor \log_p k \rfloor$, for any prime $p$ and any positive integers $t,s$ with $s \leq k$.
This implies that $p^{e+m(k-l)}$ divides the $f_i(\alpha)$'s and so by induction hypothesis
$\upsilon_i^{f_i(\alpha)} =1$, for all $ i \geq 1$.
On the other hand, it is clear that $[n^{\alpha},x_2, \ldots, x_l] =1$. Therefore $u^{\alpha}=1$ and this
completes the proof.
\end{proof}

\begin{thm}
If  $(G,N)$ is
a nilpotent pair of finite groups of class $k$ and $N$ is a $p$-group of exponent
$p^e$, then $\exp([N^*,G^*])$ divides $p^{e+m(k-1)}$,
where $m= \lfloor \log_p k \rfloor $.
\end{thm}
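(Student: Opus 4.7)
The plan is to strengthen the conclusion and prove, by reverse induction on $j$ from $j=k+1$ down to $j=2$, that
$$\exp\bigl([N^*,{}_{j-1}G^*]\bigr)\ \ \text{divides}\ \ p^{e+m(k+1-j)};$$
the case $j=2$ is precisely the theorem. Abbreviate $L_j:=[N^*,{}_{j-1}G^*]$. Each $L_j$ is a normal subgroup of $G^*$ contained in $N^*$, generated by left-normed commutators of weight $j$ whose first entry lies in $N^*$ and whose remaining entries lie in $G^*$. The base case $j=k+1$ is handled as follows: $L_{k+2}=1$ (as noted in the proof of Lemma 2.5), every generator of $L_{k+1}$ has order dividing $p^e$ by Lemma 2.5 with $w=k+1$, and applying Lemma 2.1 to any product of these generators the Hall correction terms lie in $[L_{k+1},L_{k+1}]\subseteq L_{k+2}=1$, so they vanish.

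For the inductive step, assume $\exp(L_{j+1})\mid p^{e+m(k-j)}$ and take $y\in L_j$. Writing $y=c_1^{\epsilon_1}\cdots c_r^{\epsilon_r}$ as a word in the generating commutators and setting $\alpha=p^{e+m(k+1-j)}$, Lemma 2.1 gives
$$y^\alpha = c_{i_1}^{\epsilon_{i_1}\alpha}\cdots c_{i_r}^{\epsilon_{i_r}\alpha}\cdot\prod_s \upsilon_s^{f_s(\alpha)}.$$
Each displaced factor $c_{i_l}^{\epsilon_{i_l}\alpha}$ equals $1$ by Lemma 2.5. Each $\upsilon_s$ is a commutator of weight $w_s\ge 2$ in the $c_i$; since $L_j\subseteq G^*$ and $[L_j,G^*]=L_{j+1}$, an inductive unpacking of the bracket structure shows $\upsilon_s\in L_{j+1}$, and therefore $\upsilon_s^{p^{e+m(k-j)}}=1$ by the induction hypothesis.

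The final step is to verify that $p^{e+m(k-j)}$ divides $f_s(\alpha)=\sum_{l=1}^{w_s}a_l\binom{\alpha}{l}$, for then $\upsilon_s^{f_s(\alpha)}=1$ and the whole expression for $y^\alpha$ collapses to $1$. Using $v_p\!\bigl(\binom{p^N}{l}\bigr)=N-v_p(l)$, this reduces to $v_p(l)\le m$; and since a nontrivial $\upsilon_s$ lies in $L_{j+w_s-1}$ with $j+w_s-1\le k+1$, we have $w_s\le k+2-j\le k$ (using $j\ge 2$), hence $l\le w_s\le k$ and $v_p(l)\le\lfloor\log_p k\rfloor=m$. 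This is the same binomial divisibility that underlies the proof of Lemma 2.5, now applied one level further up the pair's lower central series. The one delicate point I expect is the containment $\upsilon_s\in L_{j+1}$ for arbitrarily bracketed commutators in the $c_i$; once this small commutator-calculus step is in place the induction runs mechanically from $j=k+1$ down to $j=2$.
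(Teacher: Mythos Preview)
Your proposal is correct and uses the same ingredients as the paper---Hall--Petrescu collection (Lemma 2.1), the commutator-order bound of Lemma 2.5, and the divisibility $v_p\binom{p^N}{l}=N-v_p(l)$---with only an organizational difference: the paper applies Lemma 2.1 once to an arbitrary element of $[N^*,G^*]$ and kills both the $y_i^{\alpha}$ and the correction terms $\upsilon_i^{f_i(\alpha)}$ directly via Lemma 2.5, whereas you fold the whole thing into a single reverse induction on $j$, obtaining the group-exponent strengthening $\exp(L_j)\mid p^{e+m(k+1-j)}$ of Lemma 2.5 along the way. The containment $\upsilon_s\in L_{j+1}$ you flag as delicate is in fact immediate, since any commutator of weight $\ge 2$ in elements of $L_j$ lies in $[L_j,L_j]\subseteq[L_j,G^*]=L_{j+1}$.
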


\begin{proof}
Every element $g \in [N^*,G^*]$ can be expressed as $g=y_1 y_2 \cdots
y_n$, where $y_i=[n_i,g_i]$ for $ n_i \in N^*, g_i \in G^*$. Put $\alpha = p^{e+m(k-1)} $.
By Lemma 2.1, we have
$$g^{\alpha} = y_{i_1}^{\alpha} y_{i_2}^{\alpha} \cdots y_{i_n}^{\alpha}
\upsilon_1^{f_1(\alpha)} \upsilon_2^{f_2(\alpha)} \cdots,$$
where $\{i_1, i_2, \ldots,i_n \}=\{1, 2, \ldots, n \}$ and $\upsilon_i$ is a basic commutator of weight $w_i$ in $y_1, y_2, \ldots, y_n$, with $2\leq w_i \leq k$, for all $i \geq 1$, and also $f_i(\alpha)$ is of the form (2.1). Hence by an argument similar to the  proof of Lemma 2.5 $p^{e+m(k-2)}$
divides $f_i(\alpha)$. Then applying Lemma 2.5, we have $\upsilon_i^{f_i(\alpha)} =1$,
for all $i \geq 1$, and $y_j^{\alpha} =1$, for all $j$, $1\leq j \leq n$.
We therefore have $g^{\alpha}=1$ and the desired result follows.
\end{proof}

An upper bound  for the exponent of the Schur multiplier of some pairs of finite groups is given in the following theorem.
\begin{thm}
Let $(G,N)$ be a nilpotent pair of finite groups of class $k$ such that $\exp(N)=p^e$. Then $\exp(M(G,N))$ is a divisor of $p^{e+m(k-1)}$, where $m= \lfloor \log_p k \rfloor $.
\end{thm}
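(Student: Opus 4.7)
The plan is to obtain the bound by applying Theorem~2.6 to a covering pair of $(G,N)$. Concretely, I would begin by invoking Ellis's existence result (quoted after Definition~2.4) to fix a covering pair $\sigma:N^*\to G$ together with a subgroup $A\leq N^*$ satisfying $A\cong M(G,N)$, $N\cong N^*/A$ and $A\leq Z(N^*,G)\cap[N^*,G]$. Then I would form the semidirect product $G^*=N^*K$ using the complement structure introduced before Lemma~2.5; note that to do this one first needs the pair $(G,N)$ to have $N$ admitting a complement, which is part of the standing hypothesis in this section.

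Next, I would combine two inclusions already recorded in the text: the definitional inclusion $A\leq[N^*,G]$, and the remark that $[N^*,G]\leq[N^*,G^*]$ once we pass to the semidirect product. Together they give $A\leq[N^*,G^*]$.

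From this point the proof is immediate. Since $(G,N)$ is nilpotent of class $k$ and $\exp(N)=p^{e}$, Theorem~2.6 yields $\exp([N^*,G^*])\mid p^{e+m(k-1)}$ with $m=\lfloor\log_p k\rfloor$. Because $A$ is a subgroup of $[N^*,G^*]$, its exponent also divides $p^{e+m(k-1)}$, and the isomorphism $A\cong M(G,N)$ then transfers the bound to $\exp(M(G,N))$, completing the proof.

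There is no substantive obstacle here: all of the real commutator-calculus work is done in Lemmas~2.1, 2.2, 2.5 and Theorem~2.6. The only delicate point to be careful about is the quiet use of the complement hypothesis, which is needed to make sense of $G^*=N^*K$ and hence of Theorem~2.6 in this setting; once that is in place, the theorem is a direct corollary.
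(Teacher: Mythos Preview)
Your proposal is correct and follows exactly the paper's approach: the paper's proof is the one-line observation that $M(G,N)\cong A\leq [N^*,G]\leq [N^*,G^*]$ together with Theorem~2.6. Your additional remarks about the covering pair and the standing complement hypothesis simply make explicit what the paper leaves implicit.
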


\begin{proof}
 The result follows by Theorem 2.6 and the fact that $M(G,N)\cong A \leq [N^*,G]\leq [N^*,G^*]$.
\end{proof}

The following corollary gives a condition under which the exponent of the Schur multiplier of a pair $(G,N)$ divides the exponent of $N$.
\begin{cor}
Let $(G,N)$ be a pair of finite $p$-groups of class at most $p-1$. Then $\exp(M(G,N))$ divides $\exp(N)$.
\end{cor}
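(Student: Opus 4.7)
The plan is to deduce this corollary immediately from Theorem 2.7, whose bound $p^{e+m(k-1)}$ with $m=\lfloor \log_p k\rfloor$ already encodes the exponent of $N$ plus an error term measured by $m$. The point is that the hypothesis "class at most $p-1$" is exactly what forces this error term to vanish.

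First I would observe that, since any pair of finite $p$-groups is nilpotent (as remarked just before Definition 2.4), the pair $(G,N)$ has a well-defined nilpotency class $k$, and by hypothesis $k\le p-1$. I would separate off the trivial case $k=0$ (i.e.\ $N=1$), in which $M(G,N)$ is trivial and the statement holds vacuously. For $1\le k\le p-1$ one has $k<p$, hence $\log_p k<1$, which gives $m=\lfloor \log_p k\rfloor = 0$. Substituting $m=0$ into the bound from Theorem 2.7, with $\exp(N)=p^e$, yields
\[
\exp(M(G,N))\ \bigm|\ p^{e+m(k-1)}\ =\ p^{e}\ =\ \exp(N),
\]
which is precisely the claimed divisibility.

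There is essentially no obstacle: the commutator calculus and the binomial-coefficient divisibility estimate have already been carried out in Lemma 2.5 and Theorem 2.6, and packaged into Theorem 2.7. The only content of Corollary 2.8 is the elementary observation that the integer $m$ appearing in the bound is zero under the class hypothesis $k\le p-1$, so that the bound collapses to $\exp(N)$ itself. Thus the proof should consist of one sentence invoking Theorem 2.7 together with the remark that $\lfloor \log_p k\rfloor=0$ whenever $k\le p-1$.
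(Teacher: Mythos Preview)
Your proposal is correct and matches the paper's own treatment: the paper states Corollary 2.8 without proof, as an immediate consequence of Theorem 2.7, and your argument spells out exactly the intended observation that $k\le p-1$ forces $m=\lfloor \log_p k\rfloor=0$, collapsing the bound $p^{e+m(k-1)}$ to $p^e=\exp(N)$. Note only that the standing hypothesis of Section 2 (that $N$ admits a complement in $G$) is implicitly in force for Theorem 2.7 and hence for the corollary as well.
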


\begin{rem}
Let $G$ be a finite $p$-group of class $k$ with $exp(G)=p^e$ . Since $M(G,G)=M(G)$, Theorem 2.7 implies that $exp(M(G))$ divides $p^{e+[\log_pk](k-1)}$. It is easy to see that this bound improves the bound $p^{(2e\lfloor \log_2{k}\rfloor)}$ given by Moravec \cite{mor}. For example for any $p$-group $G$ of class $k$, $2\leq k \leq p-1$ with $exp(G)=p^e$, we have $p^{e+[\log_pk](k-1)} \leq p^{(2e\lfloor \log_2{k}\rfloor)}$.
\end{rem}

\begin{rem}
Let $(G,N)$ be a pair of finite nilpotent groups of class at most $k$. Let $S_1,S_2, \ldots, S_n$ be all the Sylow subgroups of $G$.
By [2, Corollary 1.2], we have $$ M(G,N)=M(S_{1} , S_{1} \cap N) \times \dots
\times M(S_{n} , S_{n} \cap N).$$ Put  $m_i= \lfloor \log_{p_i} k \rfloor $, for all $i$,
 $1\leq i \leq n$. Then by Theorem 2.7, we have
$$\exp( M(G,N) \mid \prod^n_{i=1}p_i^{e_i+m_i(k-1)},$$ where
$p_i^{e_i} = \exp(S_{i})$.
\end{rem}

\section{  Pairs of powerful $p$-groups}

In 1987, A. Lubotzky and A. Mann \cite{lub} defined powerful $p$-groups which are used for studying $p$-groups.
They gave some bounds for the order, the exponent and the number of
generators of the Schur multiplier of a powerful $p$-group. Also, they showed that $\exp(M(G))$ divides $\exp(G)$ when $G$ is a powerful $p$-group. The purpose of this section is to show that
if $(G,N)$ is a pair of finite $p$-groups and $N$ is powerfully embedded in $G$, then the exponent of $M(G,N)$ divides the exponent of $N$.
 Throughout this section $\mho_i(G)$ denotes the subgroup of
$G$ generated by all $p^{i}$th powers of elements of $G$. It is easy to see that $\mho_{i+j}(G) \subseteq \mho_i(\mho_j(G))$, for all positive integers $i,j$.
\begin{defn}
$(i)$ A $p$-group $G$ is
called powerful if $p$ is odd and $G' \leq \mho_1(G)$, or
$p=2$ and $G' \leq \mho_2(G)$.\\
$(ii)$ Let $G$  is a $p$-group and $N\leq G$. Then $N$ is   powerfully
embedded in $G$ if $p$ is odd and $[N,G]\leq \mho_1(N)$, or $p=2$ and $[N,G]\leq \mho_2(N)$.
\end{defn}

Any powerfully embedded subgroup is itself a powerful
$p$-group and must be normal in the whole group. Also a $p$-group is
powerful exactly when it is powerfully embedded in itself. While it
is obvious that factor groups and direct products of powerful
$p$-groups are powerful, this property is not subgroup-inherited \cite{lub}.
The following lemma gives some properties of powerful $p$-groups.\\

\begin{lem}
 (\cite{lub}). The following statements
hold for a powerful $p$-group
$G$.\\
$(i)$ $\gamma_i(G), G^{i}, \mho_i(G), \Phi(G)$ are powerfully
embedded in $G$.\\
$(ii)$ $\mho_i(\mho_j(G))=\mho_{i+j}(G)$.\\
$(iii)$ Each element of $\mho_i(G)$ can be written as $a^{p^{i}},$
for
some $a\in G$ and hence $\mho_i(G)=\{g^{p^{i}}: g\in G\}$.\\
$(iv)$ If $G=\langle a_1,a_2,...,a_d\rangle$, then $\mho_i(G)=\langle
a_1^{p^{i}},a_2^{p^{i}},...,a_d^{p^{i}}\rangle$.
\end{lem}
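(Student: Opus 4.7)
The plan is to prove the four parts in the order (i)$\to$(ii)$\to$(iii)$\to$(iv), using the Hall--Petrescu commutator collection formula as the main technical device. I will work throughout in the odd-$p$ case; the $p=2$ version is parallel, with the exponent $p$ replaced by $p^2$ at the relevant places.

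The engine for (i) is the estimate $[\mho_i(G), G] \leq \mho_{i+1}(G)$, which immediately yields $[\mho_i(G), G] \leq \mho_1(\mho_i(G))$, i.e.\ that $\mho_i(G)$ is powerfully embedded. The base case $i=1$ follows by expanding $[x^p, y]$ via collection to produce $[x,y]^p$ times commutators of weight $\geq 3$ in $x,y$; powerfulness forces $[x,y] \in \mho_1(G)$, so every factor lands in $\mho_2(G)$. An outer induction on $i$ then handles the general case. The subgroups $\gamma_i(G)$, $G^i$, and $\Phi(G) = G'\mho_1(G)$ all sit between $\mho_1(G)$ and $G$, so the same argument applies verbatim. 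Part (ii) is then a short corollary: the inclusion $\mho_{i+j}(G) \subseteq \mho_i(\mho_j(G))$ is automatic in any group, while (i) makes $\mho_j(G)$ powerful, so one can re-collect its generators and push them back into $\mho_{i+j}(G)$.

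The main obstacle is (iii): each element of $\mho_i(G)$ must be a \emph{single} $p^i$-th power, not merely a product of such. I would induct on $i$ and, for $i=1$, apply Hall--Petrescu to two $p$-th powers:
\[ x^p y^p = (xy)^p\, c_2^{\binom{p}{2}} c_3^{\binom{p}{3}} \cdots c_p^{\binom{p}{p}}, \qquad c_k \in \gamma_k(\langle x,y\rangle). \]
For $2 \leq k \leq p-1$ the coefficient $\binom{p}{k}$ is divisible by $p$ and powerfulness gives $c_k \in G' \leq \mho_1(G)$, so each $c_k^{\binom{p}{k}}$ lies in $\mho_2(G)$; the term $c_p$ is similarly absorbed into $\mho_2(G)$. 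Hence modulo $\mho_2(G)$ a product of two $p$-th powers is already a $p$-th power, and a downward induction on word length, working in successive quotients $G/\mho_j(G)$ of strictly smaller class, collapses an arbitrary finite product of $p$-th powers into a single one. Bootstrapping via $\mho_i(G) = \mho_1(\mho_{i-1}(G))$ from (ii) then closes the induction on $i$. Finally (iv) is immediate from (iii): each $g \in G$ is a word in the $a_j$, so $g^{p^i}$ reduces modulo $[\mho_i(G), G] \leq \mho_{i+1}(G)$ to a product of $a_j^{p^i}$'s, which (iii) packages back into a single element of $\langle a_1^{p^i}, \ldots, a_d^{p^i}\rangle$.
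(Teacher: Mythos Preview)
The paper does not supply its own proof of this lemma: it is quoted as background from Lubotzky--Mann \cite{lub}, so there is no argument in the paper to compare yours against. Your outline follows the standard Hall--Petrescu collection strategy used in that source, and the overall architecture for (ii)--(iv) is along the right lines.

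There is, however, a real gap in your handling of (i). The sentence ``$\gamma_i(G)$, $G^{i}$, and $\Phi(G)=G'\mho_1(G)$ all sit between $\mho_1(G)$ and $G$, so the same argument applies verbatim'' is wrong on both counts. For $i\ge 2$ one has $\gamma_i(G)\le G'\le \mho_1(G)$, so these subgroups lie \emph{below} $\mho_1(G)$, not above it; and even for a subgroup $N$ with $\mho_1(G)\le N\le G$, the containment $[N,G]\le G'\le \mho_1(G)$ does not by itself yield $[N,G]\le \mho_1(N)$, which is what ``powerfully embedded'' requires. The correct mechanism, as in Lubotzky--Mann, is a closure lemma: if $N$ is powerfully embedded in $G$ then so are $[N,G]$ and $\mho_1(N)$. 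Once that is established, induction immediately gives $\gamma_i(G)$ and $\mho_i(G)$, and the derived series follows by iterating the commutator step inside already-established powerfully embedded subgroups. You should also watch the order of dependencies between (ii) and (iii): your one-line sketch of (ii) (``re-collect its generators and push them back into $\mho_{i+j}(G)$'') implicitly uses that every element of $\mho_j(G)$ is a single $p^{j}$-th power, which is precisely the content of (iii).
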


\begin{lem}
 (\cite{lub}). Let $N$ be powerfully embedded in $G$. Then $\mho_i(N)$ is  powerfully embedded in $G$.
\end{lem}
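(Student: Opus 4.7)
The goal is to show that $[\mho_i(N), G]$ is contained in $\mho_1(\mho_i(N))$ when $p$ is odd, or in $\mho_2(\mho_i(N))$ when $p=2$. I would treat the odd case in detail; the case $p=2$ is analogous with $\mho_2$ in place of $\mho_1$ throughout. Since every powerfully embedded subgroup is itself a powerful $p$-group, once one knows $\mho_i(N)$ is powerfully embedded in $G$, Lemma 3.2(ii) gives $\mho_1(\mho_i(N)) = \mho_{i+1}(N)$, so the claim is equivalent to the cleaner assertion $[\mho_i(N), G] \leq \mho_{i+1}(N)$. The natural strategy is induction on $i$, with the hypothesis of the lemma serving as the base step $i=0$.

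For the first nontrivial case $i=1$: by Lemma 3.2(iii) applied to the powerful group $N$, every element of $\mho_1(N)$ has the form $n^p$ for some $n \in N$. I would expand $[n^p, g]$ using the commutator collection formula (equivalently, the Hall--Petrescu identity), writing it as $[n, g]^p$ times a product of iterated commutators in the letters $n$ and $[n,g]$ of weight at least two, each raised to a binomial coefficient $\binom{p}{k}$ with $k \geq 2$. Since $N$ is powerfully embedded in $G$, $[n,g] \in \mho_1(N)$, so $[n,g]^p$ is a $p$-th power of an element of $\mho_1(N)$ and hence belongs to $\mho_1(\mho_1(N)) = \mho_2(N)$. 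The correction terms involve further commutators of $n$ with the element $[n,g] \in \mho_1(N)$; iterating the powerful-embedding hypothesis pushes these progressively deeper into the $\mho$-filtration, and the factors of $p$ available in the binomial coefficients (precisely when $p$ is odd) bring every correction into $\mho_2(N)$ as required.

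For the inductive step, assume $\mho_{i-1}(N)$ is powerfully embedded in $G$. Then $\mho_{i-1}(N)$ is powerful in its own right, so Lemma 3.2(iii) applied to it shows that every element of $\mho_i(N) = \mho_1(\mho_{i-1}(N))$ is of the form $m^p$ with $m \in \mho_{i-1}(N)$. Replaying the base-case computation with the pair $(G, \mho_{i-1}(N))$ in place of $(G, N)$ then yields $[\mho_i(N), G] \leq \mho_1(\mho_i(N))$, closing the induction. The principal obstacle, and the only real work, is the bookkeeping for the correction terms in the expansion of $[n^p, g]$: one has to verify that each term produced by collection lies at least one step deeper in the $\mho$-filtration than the leading $[n,g]^p$. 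The odd-$p$ hypothesis is what makes this clean; in the $p=2$ case the binomial coefficients $\binom{2}{k}$ no longer carry a gratuitous factor of $p$, and the definition compensates by demanding $[N,G] \leq \mho_2(N)$ from the outset so that the leading term $[n,g]^2$ already sits one level deeper than otherwise.
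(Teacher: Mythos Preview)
The paper does not actually prove this lemma: it is quoted from Lubotzky--Mann \cite{lub} and stated without argument, so there is no ``paper's proof'' to compare against. Your inductive reduction is the right framework and matches the original source: once the case $i=1$ is established, the general case follows by applying it to the powerfully embedded subgroup $\mho_{i-1}(N)$.

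The weak point is your treatment of the correction terms in the expansion of $[n^p,g]$. You write that ``iterating the powerful-embedding hypothesis pushes these progressively deeper into the $\mho$-filtration,'' but the relevant correction terms are commutators such as $[n,[n,g]]\in [N,\mho_1(N)]$, and placing these in $\mho_2(N)$ is essentially the statement that $\mho_1(N)$ is powerfully embedded --- which is what you are proving. As written the argument is circular, or at best hides a second induction (on commutator weight together with the nilpotency of $G$) that you have not set up. The standard proof in \cite{lub}, and the device the present paper itself uses in Remark~3.8 and Lemma~3.9, avoids this by first passing to the quotient by $[\mho_1(N),G,G]$ (and by $\mho_1(\mho_1(N))$): modulo these subgroups the Hall--Petrescu correction terms vanish outright, so $[n^p,g]\equiv [n,g]^p\equiv 1$ with no further bookkeeping. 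If you want to keep your direct approach, you must make the nested induction explicit: show that $\gamma_j(\langle n,[n,g]\rangle)\leq \mho_1(N)$ for $j\geq 2$, so that $c_j^{\binom{p}{j}}\in \mho_2(N)$ for $2\leq j\leq p-1$, and handle $c_p$ separately using $\gamma_p\leq [N,\,_{p-1}G]\leq \mho_{p-1}(N)$.
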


The proof of the following lemma is straightforward.
\begin{lem}
Let $M$ and $G$ be two groups with an action of $G$ on $M$. Then for all $m,n \in M$, $g,h \in G$, and any integer $k$ we have the following equalities. \\
$(i)$ $[mn,g]=[m,g]^n[n,g]$;\\
$(ii)$ $[m,gh]=[m,h][m,g]^h$;\\
$(iii)$ $[m^{-1},g]^{-1}=[m,g]^{m^{-1}}$;\\
$(iv)$ $[m,g^{-1}]^{-1}=[m,g]^{g^{-1}}$;\\
$(v)$ $[m,g^{-1},h]^g[m,[g,h^{-1}]]^h[[m^{-1},h]^{-1},g]^m=1$;\\
$(vi)$ $[m^k,g]=[m,g]^k [m,g,m]^{k(k-1)/2}  \pmod{[M, \ _3G]}$.
\end{lem}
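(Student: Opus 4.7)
The plan is to verify each identity by direct computation from the defining formula $[m,g]=m^{-1}m^{g}$, using only that $G$ acts on $M$ by automorphisms: $(m_1 m_2)^{g}=m_1^{g}m_2^{g}$, $(m^{-1})^{g}=(m^{g})^{-1}$, and $(m^{g})^{h}=m^{gh}$. Where mixed commutators between elements of $M$ and $G$ appear, it is convenient to realise both groups inside the corresponding semidirect product, so that every bracket becomes an ordinary group commutator and the classical commutator calculus is available; the conclusions can then be read off inside $M$.

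Identities (i)--(iv) are pure expansions. For (i), both $[mn,g]$ and $[m,g]^{n}[n,g]$ simplify to $n^{-1}m^{-1}m^{g}n^{g}$. For (ii), both $[m,gh]$ and $[m,h][m,g]^{h}$ reduce to $m^{-1}(m^{g})^{h}$ after applying $m^{gh}=(m^{g})^{h}$. For (iii), the computation is $[m^{-1},g]^{-1}=(m(m^{g})^{-1})^{-1}=m^{g}m^{-1}=m(m^{-1}m^{g})m^{-1}=[m,g]^{m^{-1}}$, and (iv) is symmetric, using $(m^{g^{-1}})^{-1}=(m^{-1})^{g^{-1}}$.

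Identity (v) is the Hall--Witt identity transcribed to the action setting. The cleanest derivation is to apply the classical Hall--Witt relation inside the semidirect product to the triple $(m,g,h)$; rewriting each of the three cyclic factors using (iii) and (iv) brings them into the form appearing on the left of (v), and the resulting product collapses to $1$ in the semidirect product, and hence in $M$ since each factor already lies in $M$.

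Identity (vi) is proved by induction on $k$, the case $k=1$ being trivial. For $k\geq 2$, apply (i) with the factorisation $m^{k}=m\cdot m^{k-1}$ to obtain $[m^{k},g]=[m,g]^{m^{k-1}}[m^{k-1},g]$, and invoke the inductive hypothesis on $[m^{k-1},g]$. Writing $[m,g]^{m^{k-1}}=[m,g]\cdot[m,g,m^{k-1}]$ and expanding $[m,g,m^{k-1}]$ by the standard commutator identity yields, modulo the indicated subgroup, $[m,g,m]^{k-1}$; collecting contributions produces exponent $k$ on $[m,g]$ and exponent $(k-1)+(k-1)(k-2)/2=k(k-1)/2$ on $[m,g,m]$. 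The main--though still routine--obstacle is bookkeeping: one must check that all the higher-weight commutators dropped along the way actually lie in $[M,\ _{3}G]$, which follows by repeated use of (i), (iii), and (iv) together with the three-subgroup lemma applied inside the semidirect product.
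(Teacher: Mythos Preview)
The paper itself gives no argument beyond declaring the lemma ``straightforward,'' so your write-up already supplies more than the original. Your treatments of (i)--(iv) by direct expansion and of (v) via Hall--Witt in the semidirect product are the natural ones and are correct.

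For (vi) there is a genuine gap. Your induction produces, besides $[m,g]^{k}[m,g,m]^{k(k-1)/2}$, error terms such as $[[m,g],m,m]$ and $[[m,g],m,[m,g]]$, and you assert that these lie in $[M,\ _3G]$ ``by the three-subgroup lemma applied inside the semidirect product.'' They need not. In $M\rtimes G$ one still has $[M,\ _3G]=[[[M,G],G],G]$, and applying the three-subgroup lemma to the triple $[M,G],M,M$ only returns $[[M,G],M,M]$ and $[[M,M],[M,G]]$; it does not push anything into $[M,\ _3G]$. Concretely, let $M$ be free nilpotent of class~$3$ on $x,y$ and let $G=\langle g\rangle$ act by $x^{g}=xy$, $y^{g}=y$. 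Then $[M,G]=\langle y\rangle\gamma_{2}(M)$, $[[M,G],G]=\langle[x,y,y]\rangle$, and $[M,\ _3G]=1$, whereas a direct computation gives $[x^{2},g]=y^{2}[x,y]^{-1}[x,y,y]^{-1}$ but $[x,g]^{2}[x,g,x]=y^{2}[x,y]^{-1}$. So (vi) is actually false under the bare hypothesis ``$G$ acts on $M$.''

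The identity \emph{is} valid in every situation where the paper invokes it: there one has a relative central extension $\sigma:N^{*}\to G$ and $M\leq N^{*}$, so conjugation by $m\in M$ coincides with the $G$-action through $\sigma(m)$ (axiom~(iii) of Definition~2.3). Hence $[[m,g],m]=[[m,g],\sigma(m)]\in[M,\ _2G]$ and all discarded commutators do land in $[M,\ _3G]$. Thus the defect lies in the stated generality of the lemma rather than in your overall strategy; your induction goes through verbatim once one adds the hypothesis that the inner $M$-action factors through $G$, but the three-subgroup lemma alone does not bridge the gap.
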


\begin{lem}
Let $(G,N)$ be a pair of finite $p$-groups and $\sigma : N^* \rightarrow G$ be a relative central extension of $(G,N)$. Suppose that $M$ and $K$ are two normal subgroups of $N^*$. Then $M \leq K$ if  $M \leq K[M,G]$.
 \end{lem}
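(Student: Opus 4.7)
The plan is to iterate the containment $M\leq K[M,G]$, using the nilpotency of the pair $(G,N^*)$ to force the higher commutator term down to $1$.

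First, I would verify that $(G,N^*)$ is itself a nilpotent pair. Since $(G,N)$ is a pair of finite $p$-groups, the paragraph preceding Definition~2.4 gives that it is nilpotent, say $[N,\ _cG]=1$. Condition (iv) of Definition~2.3 yields $\ker\sigma\leq Z(N^*,G)$, so $[\ker\sigma,G]=1$. Since $\sigma([N^*,\ _cG])=[N,\ _cG]=1$, we get $[N^*,\ _cG]\leq\ker\sigma$ and hence $[N^*,\ _{c+1}G]=1$.

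The core of the argument is an induction on $n\geq 1$ establishing the claim $M\leq K[M,\ _nG]$. The base case $n=1$ is the hypothesis. For the inductive step, suppose $M\leq K[M,\ _nG]$. Monotonicity of the commutator in the first slot gives $[M,G]\leq[K[M,\ _nG],G]$. Using the identity $[ab,g]=[a,g]^b[b,g]$, together with the $G$-invariance of $K$ (so that $[K,G]\leq K$) and the normality of $[K,G]$ in $N^*$, we obtain
\[
[K[M,\ _nG],G]\leq[K,G]\cdot[M,\ _{n+1}G]\leq K\cdot[M,\ _{n+1}G].
\]
Substituting into the hypothesis gives $M\leq K[M,G]\leq K\cdot K[M,\ _{n+1}G]=K[M,\ _{n+1}G]$, closing the induction. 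Choosing $n=c+1$, the first step shows $[M,\ _nG]\leq[N^*,\ _nG]=1$, and so $M\leq K$.

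The main subtlety I expect is the justification of $[K,G]\leq K$, which requires $K$ to be $G$-invariant rather than merely normal in $N^*$; the analogous remark applies to $M$ so that $[M,G]$ lies in $M$ and the hypothesis $M\leq K[M,G]$ has content. Both are standard in the context of relative central extensions. Granting this, the remainder reduces to the commutator calculus already used in Section~2, and no additional input is needed.
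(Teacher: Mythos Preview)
Your proof is correct and follows essentially the same route as the paper: both iterate the hypothesis via the commutator expansion $[K[M,\ _nG],G]\leq[K,G][M,\ _{n+1}G]$ to obtain $M\leq K[M,\ _iG]$ for all $i$, then invoke the nilpotency of $(G,N^*)$ (the paper derives $[N^*,\ _{l+1}G]=1$ from $[N,\ _lG]=1$ in exactly the way you do). Your remark that one really needs $K$ to be $G$-invariant, not merely normal in $N^*$, is well taken; the paper's proof tacitly assumes $[K,G]\leq K$ as well, and in all applications (e.g.\ Lemmas~3.7 and~3.9) the subgroups involved are visibly $G$-invariant.
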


\begin{proof}
 Applying Lemma 3.4 we have
$$ M \leq K[M,G]\leq K[K[M,G],G]\leq K[K,G][M,G,G]\leq \dots \leq K[M, \ _iG], $$
for all $i\geq 1$. On the other hand, since $G$ is a finite $p$-group, there exists an integer $l$ such that  $[N, \ _lG]=1 $. Hence $[N^*, \ _{l+1}\ G]=1$ and the result follows.
\end{proof}

\begin{lem}
 Let $(G,N)$ be a pair of finite $p$-groups and $\sigma : N^* \rightarrow G$ be a relative central extension of $(G,N)$. Let $M$ be a normal
subgroup of $H$. Then the following statements hold.\\
$(i)$ If $p>2$, then $[\mho_1(M),G]\subseteq \mho_1([M,G] )[M, \ _3G]$.\\
$(ii)$ If $p=2$, then $[\mho_2(M),G]\subseteq \mho_2([M,G] ) \mho_1([M, \ _2G])[M, \ _3G]$.
\end{lem}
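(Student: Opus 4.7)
The plan is to apply the commutator expansion in Lemma 3.4(vi) to each generator of $[\mho_i(M),G]$, with $i=1$ for part (i) and $i=2$ for part (ii). Since both right-hand subgroups are normal in $N^{*}$ (conjugation by $N^{*}$ coincides with the $G$-action through $\sigma$ by property (iii) of a relative central extension, and $[M,G]$ is $G$-invariant, so its verbal and iterated commutator subgroups inherit normality in $N^{*}$), the identity $[xy,g]=[x,g]^{y}[y,g]$ reduces the required inclusions to their generators $[m^{p},g]$ and $[m^{4},g]$, respectively, where $m\in M$ and $g\in G$.

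The essential observation is that $[m,g,m]\in [M, \ _{2}G]$ for every $m\in M$ and $g\in G$. Indeed, property (iii) identifies $N^{*}$-conjugation by $m$ with the $G$-action by $\sigma(m)\in N\leq G$, so $[m,g,m]=[[m,g],\sigma(m)]$ may be viewed as a $G$-commutator and therefore lies in $[[M,G],G]=[M, \ _{2}G]\leq [M,G]$. Consequently, any power $[m,g,m]^{s}$ with $p\mid s$ lies in $\mho_{1}([M, \ _{2}G])$, and in particular in $\mho_{1}([M,G])$.

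For part (i), I would take $k=p$ in Lemma 3.4(vi), obtaining
\[
[m^{p},g]\equiv [m,g]^{p}\,[m,g,m]^{p(p-1)/2}\pmod{[M, \ _{3}G]}.
\]
The first factor lies in $\mho_{1}([M,G])$. Since $p$ is odd, $p\mid p(p-1)/2$, so $[m,g,m]^{p(p-1)/2}=\bigl([m,g,m]^{(p-1)/2}\bigr)^{p}$ is a $p$-th power of an element of $[M, \ _{2}G]\leq [M,G]$ and therefore also lies in $\mho_{1}([M,G])$. For part (ii), take $k=4$:
\[
[m^{4},g]\equiv [m,g]^{4}\,[m,g,m]^{6}\pmod{[M, \ _{3}G]}.
\]
Here $[m,g]^{4}\in\mho_{2}([M,G])$, while $[m,g,m]^{6}=\bigl([m,g,m]^{3}\bigr)^{2}\in\mho_{1}([M, \ _{2}G])$, yielding the required containment.

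The one real point to verify, rather than a serious obstacle, is the identification of $[m,g,m]$ as a $G$-commutator (via property (iii)) so that it is known to live in $[M, \ _{2}G]$; once this is in hand, the two cases reduce to the elementary arithmetic facts that $p\mid p(p{-}1)/2$ for odd $p$ and $6=2\cdot 3$.
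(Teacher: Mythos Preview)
Your proposal is correct and follows essentially the same route as the paper: expand $[m^{p},g]$ (resp.\ $[m^{4},g]$) via Lemma~3.4(vi) modulo $[M,\ _{3}G]$ and use $p\mid p(p-1)/2$ for odd $p$ (resp.\ $2\mid 6$) to place the factors in the required subgroups. You are in fact a bit more explicit than the paper about the reduction to generators and about why $[m,g,m]\in[M,\ _{2}G]$ via property~(iii) of the relative central extension.
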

\begin{proof} $(i)$ It is enough to show that $[m^p,g]\in \mho_1([M,G] )[M, \ _3G]$, for all $m \in M, g \in G$.
 By Lemma 3.4  $[m^p,g]={[m,g]}^p {[m,g,m]}^{p(p-1)/2}\ \ \ \pmod {[M, \ _3G]}.$ Since $p$ is odd and $p \mid \frac{p(p-1)}{2}$ we have ${[m,g]}^p {[m,g,m]}^{p(p-1)/2} \in \mho_1([M,G] )$. Now the result holds.\\
 $(ii)$ The proof is similar to $(i)$.
\end{proof}

\begin{lem}
Let $(G,N)$ be a pair of finite $p$-groups and $\sigma : N^* \rightarrow G$ be a relative central extension of $(G,N)$.  Suppose that $K \leq N^*$. Then the following statements hold.\\
$(i)$ If $p>2$, then $[K,G] \leq \mho_1 (K)$ if and only if $[K/[K,\  _2G], G]\leq \mho_1( K/[K,\  _2G])$.\\
$(ii)$ If $p=2$, then $[K,G] \leq \mho_2 (K)$ if and only if $[K/[K,\  _2G], G]\leq \mho_2( K/[K,\  _2G])$.\\
$(iii)$ If $p=2$, then $[K,G] \leq \mho_2 (K)$ if and only if $[K/\mho_1([K, G]), G]\leq \mho_2( K/\mho_1([K, G]))$.
\end{lem}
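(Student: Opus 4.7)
The plan is to treat all three parts in parallel. The ``only if'' directions are immediate in each case: since $[K,\ _2G]\leq [K,G]$ and $\mho_1([K,G])\leq [K,G]$, the inclusion $[K,G]\leq \mho_i(K)$ descends to the prescribed quotient without any extra work. The content lies in the ``if'' directions.

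For parts (i) and (ii), the hypothesis pulls back to
\[
[K,G]\ \leq\ \mho_i(K)\,[K,\ _2G]\ =\ \mho_i(K)\,[[K,G],G],
\]
with $i=1$ for (i) and $i=2$ for (ii). Setting $M:=[K,G]$, this reads $M\leq \mho_i(K)[M,G]$, so Lemma 3.5 applies (with its $K$ taken to be $\mho_i(K)$) and yields $M\leq \mho_i(K)$, which is exactly the desired conclusion. The parallel structure is essentially forced by the fact that the quotient ${}{}/[K,\ _2G]$ kills one more step of the descending $G$-central series, which is precisely what Lemma 3.5 needs to fill in.

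Part (iii) is the main obstacle. The hypothesis pulls back to $L\leq \mho_2(K)\mho_1(L)$, where $L:=[K,G]$, and the correction term $\mho_1(L)$ is \emph{not} of the form $[L,G]$, so Lemma 3.5 is not directly applicable. My plan is to pass to the finite $p$-group quotient $\bar K:=K/\mho_2(K)$ and write $\bar L$ for the image of $L$. Since the verbal operator $\mho_1$ commutes with quotient maps, the inclusion above becomes $\bar L\leq \mho_1(\bar L)$, and since the opposite containment always holds we conclude $\bar L=\mho_1(\bar L)$. Now in any finite $p$-group a subgroup $H$ with $H=\mho_1(H)$ must be trivial, because $\mho_1(H)\leq \Phi(H)\lneq H$ whenever $H\neq 1$. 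Therefore $\bar L=1$, i.e.\ $[K,G]\leq \mho_2(K)$. The essential difficulty, and the reason (iii) requires a new idea rather than another application of Lemma 3.5, is precisely this asymmetry: the correction $\mho_1([K,G])$ is a verbal subgroup of $[K,G]$ itself rather than the next term of the $G$-central series, so one must absorb it by exploiting finiteness through the Frattini subgroup instead of by a nilpotency-style iteration.
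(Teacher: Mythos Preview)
Your treatment of parts (i) and (ii) matches the paper exactly: the paper also disposes of necessity by passing to the quotient and then invokes Lemma~3.5 for sufficiency, just as you do (it simply writes ``Sufficiency follows by Lemma~3.5'' without spelling out the substitution $M=[K,G]$).

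For part (iii) your argument is correct but organized differently from the paper's. The paper works in the quotient by $\mho_1([K,G])$ that appears in the statement: it notes that the image of $[K,G]$ there has exponent~$2$, hence is abelian, hence has trivial Frattini subgroup; this forces $\Phi([K,G])=\mho_1([K,G])$, and then from $[K,G]\leq \mho_2(K)\Phi([K,G])$ a Frattini non-generator argument gives $[K,G]\leq \mho_2(K)$. You instead pass to the ``target'' quotient $K/\mho_2(K)$ and observe directly that $\bar L=\mho_1(\bar L)$, which kills $\bar L$ in one stroke. Your route is shorter and avoids the intermediate ``exponent~$2$ implies abelian'' step; the paper's route has the minor expository advantage of staying inside the quotient named in the statement. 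Both ultimately rest on the same Frattini principle, and both tacitly use that $[K,G]$ (resp.\ $\bar L$) is a finite $p$-group, which is harmless in the intended applications.
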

\begin{proof} $(i)$ Let $[K,G] \leq \mho_1 (K)$ and put $H=[K, \ _2G]$. Then
$$ [\frac{K}{H},G]=\frac{[K,G]H}{H} \leq \frac{\mho_1(K)H}{H}=\mho_1(\frac{K}{H}),$$ as desired. Sufficiency follows by Lemma 3.5.\\
$(ii)$ The proof is similar to $(i)$.\\
$(iii)$  Necessity follows as for (i). Let $[K/\mho_1([K, G]), G]\leq \mho_2( K/\mho_1([K, G]))$. Then $[K,G]\leq \mho_2(K) \mho_1([K,G])$. On the other hand, $\mho_1([K/\mho_1([K, G]), G])$. Thus $[K/\mho_1([K, G]), G]$ is abelian and so $\Phi([K/\mho_1([K, G]), G])=1$. This implies that $\Phi([K,G])=\mho_1([K,G])$. Therefore $[K,G]\leq \mho_2(K)$.
\end{proof}

The following useful remark is a consequence of Lemma 3.7.
\begin{rem}
Let $(G,N)$ be a pair of finite $p$-groups and $\sigma : N^* \rightarrow G$ be a relative central extension of $(G,N)$. Let $K \leq N^*$. Then to prove that $[K,G] \leq \mho_1 (K)$ ( $[K,G] \leq \mho_2 (K)$ for $p=2$)  we can assume that\\
$(i)$ $[K, \ _2G]=1$;\\
$(ii)$ $\mho_1(K)=1\ (  \mho_2(K)=1$ for $p=2$ ) and try to show that $[K,G]=1$;\\
$(iii)$ $\mho_1([K,G])=1$ whenever $p=2$.
\end{rem}
\begin{lem}
Let $(G,N)$ be a pair of finite $p$-groups and $\sigma : N^* \rightarrow G$ be a covering pair of $(G,N)$. Let $N$ be powerfully embedded in $G$.  \\
$(i)$ If $p>2$, then $[\mho_n([N^*,G]),G]\leq \mho_1(\mho_n([N^*,G]))$ .\\
$(ii)$ If $p=2$, then $[\mho_n([N^*,G]),G]\leq \mho_2(\mho_n([N^*,G]))$.
\end{lem}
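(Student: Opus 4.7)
My plan is to argue by induction on $n$, with the case $n=0$ doing the real work and the inductive step reducing to a formal manipulation once a structural fact about $[N^*,G]$ is in hand. Throughout I write $K_n = \mho_n([N^*,G])$.

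For the base case in the odd-$p$ setting I use that $N$ is powerfully embedded to obtain $[N,G] \leq \mho_1(N)$. Since $\sigma$ is surjective onto $N$ with kernel $A$, this lifts to $[N^*,G] \leq \mho_1(N^*) A$. The hypothesis $A \leq Z(N^*,G)$ kills the $A$-factor under the $G$-action, so $[[N^*,G],G] \leq [\mho_1(N^*)A,G] = [\mho_1(N^*),G]$, which by Lemma 3.6(i) sits inside $\mho_1([N^*,G])[N^*,\ _3 G]$. Since $[N^*,G]$ is $G$-invariant, $[N^*,\ _3 G] \leq [[N^*,G],G]$, and Lemma 3.5 applied with $M = [[N^*,G],G]$ and $K = \mho_1([N^*,G])$ closes the base case.

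The key structural step, and in my view the main obstacle to get right, is the observation that $[N^*,G]$ is itself a powerful $p$-group. This uses condition (iii) of the relative central extension: for $x,y \in N^*$ one has $[x,y] = [x,\sigma(y)]$, so $[[N^*,G],[N^*,G]] \leq [[N^*,G],G] \leq \mho_1([N^*,G])$ by the base case. Lemma 3.2(ii) then delivers the crucial identity $\mho_1(K_n) = K_{n+1}$ for every $n \geq 0$; without this step the inductive hypothesis does not connect to the quantity we wish to control.

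The inductive step is then short. Assuming $[K_n,G] \leq \mho_1(K_n) = K_{n+1}$, I apply Lemma 3.6(i) with $M = K_n$ to obtain $[K_{n+1},G] = [\mho_1(K_n),G] \leq \mho_1([K_n,G])[K_n,\ _3 G]$. The inductive hypothesis gives $\mho_1([K_n,G]) \leq \mho_1(K_{n+1})$ and $[K_n,\ _3 G] \leq [[K_{n+1},G],G]$, and a second application of Lemma 3.5 (with $M = [K_{n+1},G]$ and $K = \mho_1(K_{n+1})$) finishes the induction. The case $p=2$ runs in parallel with $\mho_2$ in place of $\mho_1$ and Lemma 3.6(ii) in place of Lemma 3.6(i); the additional factor $\mho_1([M,\ _2 G])$ appearing on the right of Lemma 3.6(ii) is absorbed by invoking the reduction in Remark 3.8(iii), which permits the extra assumption $\mho_1([K,G])=1$ while proving $[K,G] \leq \mho_2(K)$.
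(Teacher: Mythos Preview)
Your argument for odd $p$ is correct and follows the same inductive strategy as the paper. Your base case is in fact a bit cleaner: where the paper invokes Remark~3.8 to reduce to $[[N^*,G],G,G]=1$ and $\mho_1([N^*,G])=1$ and then redoes the element-wise computation $(a^p)^b = a^p[a,b]^p[a,b,a]^{p(p-1)/2}$ to show $\mho_1(N^*)\leq Z(N^*,G)$, you instead apply Lemma~3.6(i) directly (which already encapsulates that computation) and let Lemma~3.5 absorb the $[N^*,\ _3G]$ term. The inductive step and the observation that $[N^*,G]$ is powerful (so $\mho_1(K_n)=K_{n+1}$) match the paper exactly; the paper simply phrases the step as the abstract implication ``$[H,G]\leq\mho_1(H)\Rightarrow[\mho_1(H),G]\leq\mho_1(\mho_1(H))$'' before specialising to $H=K_n$.

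There is, however, a genuine wrinkle in your $p=2$ sketch. The phrase ``runs in parallel with $\mho_2$ in place of $\mho_1$ and Lemma~3.6(ii) in place of Lemma~3.6(i)'' is fine for the base case, but it does not transfer to the inductive step: the passage from $K_n$ to $K_{n+1}$ is still via $\mho_1$ (since $K_{n+1}=\mho_1(K_n)$ by powerfulness of $[N^*,G]$), whereas Lemma~3.6(ii) only bounds $[\mho_2(M),G]$, not $[\mho_1(M),G]$. A literal parallel would produce a step of size two, covering only even $n$. The paper closes this gap by a separate direct argument under the Remark~3.8 reductions: from $[H,G]\leq\mho_2(H)$ one first shows $\mho_2(H)\leq Z(H,G)$ via $[a^4,b]=[a^2,b]^2=1$, whence $[a^2,b]=[a,b]^2$ and $[\mho_1(H),G]=\mho_1([H,G])\leq\mho_1(\mho_2(H))=\mho_3(H)\leq\mho_2(\mho_1(H))$. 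Your outline needs this (or an equivalent substitute) in place of the ``parallel'' claim for the $p=2$ inductive step.
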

\begin{proof}
 $N^*$ has a subgroup $A$ such that $A \leq Z(N^*,G)\cap [N^*,G]$, $A\cong M(G,N)$ and $N\cong N^*/A$.\\
$(i)$ Let $p>2$. We use induction on $n$. If $n=0$, then by Remark 3.8 we may assume that $[[N^*,G],G,G]=1$, $\mho_1([N^*,G])=1$ and
 we should show that $[[N^*,G],G]=1$.
  Since $N$ is powerfully embedded in $G$, we have  $[N,G]\leq  \mho_1(N)$, and therefore  $[N^*,G]\leq  \mho_1(N^*)A$. Now we claim that  $\mho_1(N^*)\leq Z(N^*,G)$.
  To prove the claim, let $ a \in N^*$ and $b\in G$. Since $\gamma_3(\langle a, [N^*,G]\rangle)=1$, we have  $cl(<a,a^b>) \leq cl(<a, [N^*,G]>)\leq 2$  ($cl(H)$ denotes the nilpotency class of $H$). On the other hand, Lemma 3.4 implies that $$
  (a^p)^b=a^p[a^p,b]\equiv  a^p[a,b]^p[a,b,a]^{p(p-1)/2} \pmod{[<a>, \ _3G]}.$$  Therefore $(a^p)^b=a^p$ since $[[N^*,G],G,G]=1$ and $\mho_1([N^*,G])=1$.
  Hence  $\mho_1(N^*)\leq Z(N^*,G)$ as desired. Thus $[N^*,G] \leq  \mho_1(N^*)A\leq Z(N^*,G)$ and the result follows for $n=0$.
  
  Now suppose that the induction hypothesis is true for $n=k$. The first step of induction implies that $[N^*,G]$ is powerful.
  Using Lemmas 3.5 and 3.6, one can see that if $H$ is a subgroup of $N^*$ and $[H,G] \leq  \mho_1(H)$, then $[\mho_1(H),G] \leq \mho_1(\mho_1(H))$.  Hence by Lemma 3.2 and induction hypothesis we have
$$[\mho_{k+1}([N^*,G]),G]= [\mho_1(\mho_{k}([N^*,G])),G]\leq \mho_1(\mho_1(\mho_{k}([N^*,G])))$$ $$=\mho_1(\mho_{k+1}([N^*,G]))$$ which completes the proof.\\
$(ii)$ Let $p=2$. The proof is similar to (i), but we need to prove that if $H$ is a subgroup of $N^*$ and $[H,G]\leq \mho_2(H)$, then $[ \mho_1(H),G] \leq \mho_2(\mho_1(H))$. By  Remark 3.8,
  for $a \in H, b \in G$ we have $[a^4,b]=[a^2,b]^2=1$. So $a^4 \in Z(H,G)$ and $\mho_2(H)\leq Z(H,G)$. Then $[H,G]\leq \mho_2(H)\leq Z(H,G)$. Therefore $[a^2,b]=[a,b]^2$ and
  \begin{equation}
  [\mho_1( H),G]=\mho_1([H,G]).
 \end{equation}
 On the other hand, since $\mho_2(H)\leq Z(H,G)$, we have
 $$\mho_1(\mho_2(H))=<({a_1}^4 \dots {a_k}^4)^2| a_i \in H>=<{a_1}^8 \dots {a_k}^8>=\mho_3(H)\leq \mho_2(\mho_1(H)). $$
 Hence (3.1) implies that $[\mho_1(H),G] \leq \mho_2(\mho_1(H))$ which completes the proof of the above claim.
\end{proof}
\begin{lem}
Let $H$ and $G$ be two arbitrary groups with an action of $G$ on $N$. If $x \in H$ and $g\in G$, then $$[x^n,g]=[x,g]^n c, $$ where $M=\langle x,[x,g] \rangle$ and $ c \in \gamma_2(M)$.
\end{lem}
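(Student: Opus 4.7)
The plan is to induct on $n\geq 1$, with $M=\langle x,[x,g]\rangle$ held fixed throughout the induction. The base case $n=1$ is trivial, with $c=1\in\gamma_{2}(M)$. For the inductive step, I would factor $x^{n}=x\cdot x^{n-1}$ and apply part $(i)$ of Lemma 3.4 to obtain
\[
[x^{n},g]\;=\;[x,g]^{x^{n-1}}\,[x^{n-1},g].
\]
The induction hypothesis rewrites the second factor as $[x,g]^{n-1}c_{n-1}$ with $c_{n-1}\in\gamma_{2}(M)$. Using the standard identity $a^{b}=a\,[a,b]$, the first factor becomes $[x,g]\cdot[[x,g],x^{n-1}]$, and since both $[x,g]$ and $x^{n-1}$ lie in $M$, the error commutator $d:=[[x,g],x^{n-1}]$ belongs to $[M,M]=\gamma_{2}(M)$.

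Assembling the pieces gives
\[
[x^{n},g]\;=\;[x,g]\cdot d\cdot[x,g]^{n-1}c_{n-1}.
\]
To collect the powers of $[x,g]$ on the left, I would push $d$ past $[x,g]^{n-1}$; because $[x,g]\in M$ and $\gamma_{2}(M)$ is normal in $M$, the conjugate $[x,g]^{-(n-1)}d\,[x,g]^{n-1}$ still lies in $\gamma_{2}(M)$. Hence the expression collapses to $[x,g]^{n}\,c_{n}$ with $c_{n}\in\gamma_{2}(M)$, completing the induction.

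The whole argument is a routine commutator calculation, so there is no serious obstacle. The one point that warrants care is the bookkeeping: every auxiliary commutator produced during the expansion must have both of its arguments already inside $M$, so that it lands in $\gamma_{2}(M)$ rather than in some larger commutator subgroup of the ambient group; fixing $M=\langle x,[x,g]\rangle$ from the outset (rather than allowing it to depend on $n$) is precisely what makes this work.
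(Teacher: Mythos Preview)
Your induction argument is correct. The paper, however, takes a different and shorter route: it writes
\[
[x^{n},g]=(x^{n})^{-1}(x^{n})^{g}=x^{-n}(x^{g})^{n}=x^{-n}\bigl(x\,[x,g]\bigr)^{n},
\]
and then applies the collection formula of Lemma~2.1 to $(x\,[x,g])^{n}$, obtaining $x^{n}[x,g]^{n}c'$ with $c'$ a product of commutators of weight $\ge 2$ in $x$ and $[x,g]$, hence $c'\in\gamma_{2}(M)$; cancelling $x^{-n}x^{n}$ finishes the proof in one line. Your approach instead inducts on $n$ using only the elementary identity $[ab,g]=[a,g]^{b}[b,g]$ from Lemma~3.4(i), together with normality of $\gamma_{2}(M)$ in $M$ to absorb the conjugation error. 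The trade-off is that the paper's proof is quicker but relies on the heavier machinery of Lemma~2.1, while yours is self-contained from first principles and makes the role of $\gamma_{2}(M)$ more transparent.
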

\begin{proof} Applying Lemma 2.1, we have
$$  [x^n,g]=(x^n)^{-1}(x^n)^{g}=(x)^{-n}(x^g)^n=(x)^{-n}(x[x,g])^n=[x,g]^n c, $$
where $M=\langle x,[x,g] \rangle, c \in \gamma_2(M)$.
\end{proof}

Now we can state the main result of this section.
\begin{thm}
  Let $(G,N)$ be a pair of finite $p$-groups in which $N$ is powerfully embedded in $G$. Then
  $\exp(M(G,N))$ divides $ \exp(N)$.
  \end{thm}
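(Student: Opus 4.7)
The plan is to pass to a covering pair $\sigma : N^* \to G$ of $(G, N)$ and prove that the associated subgroup $A \cong M(G, N)$ has exponent dividing $p^e := \exp(N)$. Three preliminary facts anchor the argument. First, $A$ is actually central in $N^*$ (not just $G$-central): property (iii) of a relative central extension converts each $N^*$-conjugation on $a \in A$ into the $G$-action of $\sigma(n^*)$, which is trivial on $A$. Second, $N \cong N^*/A$ combined with $\exp(N) = p^e$ gives $\mho_e(N^*) \le A$, so every $p^e$-th power in $N^*$ is $G$-central. Third, setting $K := [N^*, G]$, property (iii) yields $[K, K] \le [K, G]$, and Lemma 3.9 (with $n = 0$) then gives $[K, K] \le \mho_1(K)$ for odd $p$ (respectively $\le \mho_2(K)$ for $p = 2$), so $K$ is itself a powerful $p$-group.

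The next observation reduces the theorem to a depth statement. Since $K/A \cong [N, G]$ has exponent dividing $p^e$, we have $\mho_e(K) \le A$, so $\mho_e(K)$ is a central abelian finite $p$-subgroup of $N^*$. Such a group is trivial if and only if it equals its own $p$-th power subgroup, so in view of the equality $\mho_1(\mho_e(K)) = \mho_{e+1}(K)$ (Lemma 3.2(ii) applied to the powerful $K$) it suffices to show $\mho_e(K) \le \mho_{e+1}(K)$. Since $K$ is finite and generated by the $G$-commutators $[n^*, g]$, Lemma 3.2(iv) further reduces the task to proving $[n^*, g]^{p^e} \in \mho_{e+1}(K)$ for every $n^* \in N^*$ and $g \in G$.

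The core calculation is this: Lemma 3.10 applied to the $G$-central power $(n^*)^{p^e}$ gives
$$[n^*, g]^{p^e} = c^{-1}, \qquad c \in \gamma_2\bigl(\langle n^*, [n^*, g]\rangle\bigr).$$
Expanding $c$ with the collection formula (Lemma 2.1), $c = \prod_j \upsilon_j^{f_j(p^e)}$, where $\upsilon_j$ is a basic commutator of weight $w_j \ge 2$ in the two letters $n^*, [n^*, g]$ and $f_j(p^e) = \sum_{k=1}^{w_j} a_k \binom{p^e}{k}$. Property (iii), combined with $\sigma([n^*, g]) = [\sigma(n^*), g] \in [N, G] \le \mho_1(N)$ (respectively $\mho_2(N)$ for $p = 2$), rewrites each $N^*$-bracket on $[n^*, g]$ or its descendants as a $G$-commutator whose right entry is a high $p$-power in $N$. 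Iterating the consequence $[\mho_n(K), G] \le \mho_{n+1}(K)$ for odd $p$ (respectively $\le \mho_{n+2}(K)$ for $p = 2$), obtained by pairing Lemma 3.9 with Lemma 3.2(ii), one gets $\upsilon_j \in \mho_{w_j - 1}(K)$ (respectively $\mho_{2(w_j - 1)}(K)$). Matching this $\mho$-depth against the valuation $v_p\bigl(\binom{p^e}{k}\bigr) = e - v_p(k)$ of the summands of $f_j(p^e)$ places each factor $\upsilon_j^{f_j(p^e)}$ in $\mho_{e+1}(K)$ for every $w_j \ge 2$, and hence $c \in \mho_{e+1}(K)$.

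The main obstacle will be the bookkeeping in the last step: for each admissible weight $w_j \ge 2$ one must verify the inequality $w_j - 1 + v_p(f_j(p^e)) \ge e + 1$ (and its $p = 2$ analog $2(w_j - 1) + v_2(f_j(2^e)) \ge e + 1$). The $p = 2$ case is the tightest, with equality already at $w_j = 2$, so the two opposing shifts — doubled $\mho$-depth and decreased binomial valuation — must compensate exactly; invoking Remark 3.8(iii) to reduce to $\mho_1([K, G]) = 1$ while verifying the basic commutator depths is the cleanest way to handle it.
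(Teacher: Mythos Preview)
Your proposal is correct and takes a genuinely different route from the paper. The paper proves, by induction on $k$, the structural identity $\mho_k([N^*,G]) = [\mho_k(N^*),G]$; setting $k=e$ then gives $\mho_e([N^*,G]) = [\mho_e(N^*),G] = 1$ since $\mho_e(N^*) \le A \le Z(N^*,G)$, whence $\exp(A)\mid p^e$. You instead attack $\mho_e(K)=1$ directly: expand $[n^*,g]^{p^e}$ via Hall--Petrescu (Lemmas~2.1 and~3.10), locate each commutator $\upsilon_j$ of weight $w_j$ in $\gamma_{w_j}(M)\le \mho_{w_j-1}(K)$ (the containment following from property~(iii) of the relative extension together with iterated use of Lemma~3.9, since $\gamma_{w+1}(M)=[\gamma_w(M),M]\le[\mho_{w-1}(K),\sigma(M)]\le[\mho_{w-1}(K),G]\le\mho_w(K)$), and then balance that $\mho$-depth against the $p$-adic valuation $e-v_p(k)$ of the binomial summands of $f_j(p^e)$. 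The paper's argument produces a reusable identity relating the agemo filtrations of $K$ and $N^*$, while yours is more computational and closer in spirit to the Section~2 techniques; both rest on the same pivotal Lemma~3.9 establishing $[\mho_n(K),G]\le\mho_{n+1}(K)$ (respectively $\mho_{n+2}(K)$ for $p=2$). One small point: your phrase ``a $G$-commutator whose right entry is a high $p$-power in $N$'' is not what actually drives the depth estimate---the right entry is merely $\sigma(m)\in N$, and the depth accrues purely through iteration of Lemma~3.9---but your next sentence already supplies the correct mechanism.
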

\begin{proof} Let $p>2$ and $\sigma : N^* \rightarrow G$ be a covering pair of $(G,N)$ with a subgroup $A$ such that $A \leq Z(N^*,G)\cap [N^*,G]$, $A\cong M(G,N)$ and $N\cong N^*/A$. It is enough to show that $\exp([N^*,G])=\exp(N^*/Z(N^*,G))$. For this we use induction on $k$ and show that
\begin{equation}
\mho_k([N^*,G])=[\mho_k(N^*),G].
\end{equation}
If $k=0$, then (3.2) holds. Now assume that (3.2) holds, for  $k=n$.
Working in powerful $p$-group  $N^*/A$ we get  $\mho_{n+1}(N^*/A)=\mho_1 (\mho_n(N^*/A))$ by Lemma 3.2. Hence
\begin{equation}
 \frac{\mho_{n+1}(N^*)A}{A}=\mho_1(\frac{
\mho_n(N^*)A}{A})=\frac{\mho_1(\mho_n(N^*)A)A}{A}.
\end{equation}
Then Lemmas 3.6 and 3.9 and induction hypothesis imply that
\begin{eqnarray*}
 [\mho_{n+1}(N^*),G]=
[\mho_1 (\mho_n(N^*)A)A,G] & \leq &
\mho_1 ([\mho_n(N^*)A,G])[\mho_n(N^*)A, \ _3G] \\& \leq & \mho_1 ([\mho_n(N^*),G])[\mho_n(N^*)A, \ _2G] \\&\leq& \mho_1 (\mho_n([N^*,G])[\mho_n([N^*,G]),G] \\&\leq& \mho_1 (\mho_n([N^*,G])=\mho_{n+1}([N^*,G]).
\end{eqnarray*}
For the reverse inclusion, we show that
$$\mho_{n+1}([N^*,G]) \equiv 1 \pmod{[\mho_{n+1}(N^*),G]}.$$
Since by $(3.4)$, $[\mho_{n+1}(N^*),G])=[\mho_1 (\mho_n(N^*)A)A,G]=[\mho_1 (\mho_n(N^*)A),G]$, it follows that
$\mho_1 (\mho_n(N^*)A)A \leq Z(N^*,G)  \pmod{[\mho_{n+1}(N^*),G]}$.\\
On the other hand, since $N$ is powerfully embedded in $G$, we have
\begin{eqnarray*}
 [\mho_{n}(N^*),G]=[\mho_{n}(N^*)A,G] &\leq& \mho_1 (\mho_n(N^*)A)A \\&\leq& Z(N^*,G) \pmod{[\mho_{n+1}(N^*),G]}.
 \end{eqnarray*}
Therefore $[\mho_{n}(N^*),G,G]\equiv 1 \pmod{[\mho_{n+1}(N^*),G]}.$\\
 Moreover, by Lemma 3.10 we have
 $$[\mho_1 (\mho_n(N^*)A),G][\mho_{n}(N^*),G,G]=\mho_1( [ \mho_n(N^*),G])[\mho_{n}(N^*),G,G].$$
 It follows that   $\mho_1 [(\mho_n(N^*)),G]\equiv 1 \pmod{[\mho_{n+1}(N^*),G]}$. Then by induction hypothesis, we have
 $$\mho_{n+1}([N^*,G])= \mho_1(\mho_{n}[N^*,G])=\mho_1 ([\mho_n(N^*),G])\equiv 1 \pmod{[\mho_{n+1}(N^*),G]}.$$
 This completes the proof for odd primes $p$. The proof for the case $p=2$ is similar.
\end{proof}
\section*{Acknowledgement}
The authors would like to thank the referee for the valuable comments and useful suggestions to improve the present paper.

This research was supported by a grant from Ferdowsi University of Mashhad; (No. MP90259MSH).


\begin{thebibliography}{9}



\bibitem{bay} A.J. Bayes, J. Kautsky, J.W. Wamsley, \textit{Computation in Nilpotent Groups
(application)}, in: Proc. 2nd Internat. Conf. Theory of Groups,
in: Lecture Notes in Math. Vol. {\bf 372}, Springer-Verlag (1974), 82-89.

\bibitem{el98} G. Ellis,  \textit{The Schur Multiplier of a Pair of Groups},  Appl. Categ. Structures,  {\bf 6} (1998), 355-371.

\bibitem{el2001} G. Ellis, \textit{On the Relation Between Upper Central Quotients
and Lower Central Series of a Group},  Trans. Amer. Math. Soc. {\bf
353} (2001), 4219-4234.

\bibitem{jons} M.R. Jones, \textit{Some inequalities for the multiplicator of a
finite group II},  Proc. Amer. Math. Soc. {\bf 45} (1974), 167-172.

\bibitem{kay} S. Kayvanfar and M.A. Sanati, \textit{A Bound for the Exponent of the Schur Multiplier
 of some Finite p-Groups},  Bull. Iranian Math. Soc. Vol. {\bf 26}, No. 2 (2000), 89-95.

\bibitem{lub} A. Lubotzky and A. Mann, \textit{Powerful p-groups. I. Finite Groups.},
 J. Algebra {\bf 105} (1987), 484-505.

\bibitem{mhm} B. Mashayekhy, A. Hokmabadi and F. Mohammadzadeh, \textit{On a Conjecture of a Bound for the Exponent of the Schur Multiplier of a Finite $p$-Group},  Bull. Iranian Mathematical Society, to appear.

\bibitem{mor} P. Moravec, \textit{Schur multipliers and power endomorphisms of groups},
{\em J. Algebra} {\bf 308} (2007), 12-25.

\bibitem{st} R.R. Struik, \textit{On Nilpotent Product of Cyclic Groups},
Canad. J. Math. {\bf 12} (1960), 447-462.
\end{thebibliography}
\end{document}